\theoremstyle{plain}
\newtheorem{theorem}{Theorem}[section]
\newtheorem{lemma}[theorem]{Lemma}
\newtheorem{proposition}[theorem]{Proposition}
\theoremstyle{definition}
\newtheorem{definition}[theorem]{Definition}
\newtheorem{example}[theorem]{Example}
\DeclareMathOperator{\dom}{dom}
\begin{document}
	
	\title{ Ideal category of a Noetherian ring}
	
			\author[P. G. Romeo]{P. G. Romeo and 
			Minnumol P K}
			
	\address{Department of Mathematics\\
		Cochin University of Science and Technology(CUSAT)\\Kochi, Kerala, India, 682022}
	\email{$romeo_-parackal@yahoo.com,\, pkminnumol@gmail.com$}
	
	\thanks{First author wishes to thank Council of Scientific and Industrial Research(CSIR) INDIA, for providing financial support.}
	\keywords{Ring, Ideals, Finitely generated ideals, Category of ideals of ring }
	
	\begin{abstract}
	In this paper we describe the categories $ \mathbb{L}_R,\, [\mathbb{R}_R]$  whose objects are left [right] ideals of a  Noetherian ring $ R $ with unity and morphisms are appropriate $R$-linear transformations. Further it is shown that these are preadditive categories with zero object and are full subcategories of the  $ R $ - modulue category with the property that these are categories with subobjects and the morphisms admits factorization property.		
	\end{abstract}
\maketitle
	\section{Introduction}\label{sec:intro}	
Category theory was introduced by Samuel Elienberg and Saunders Maclane\cite{maclane} in 1945 with the grant aim to unify the structural analysis of various stuctures and to simplify the   presentation. Several mathematicians successfully used category theory to study various mathematical structures. In  \cite{kss} K.S.S Nambooripad described certain categories which he call normal categories to characterize the ideals of a regular semigroup as categories which enabes him to obtain beautiful representations of regular semigroups. Later in 
\cite{pgr} this approach was extended to regular rings.
 In this paper we consider  a Noetherian ring $ R $ with unity and describe categories of 
 and right ideals of $ R $ having morphisms the left (right) $ R $ - linear transformations  and discuss various properties of these categories and compare these with the categorical properties of $ R $ - modulue categores. 
 
\section{Preliminaries}

	A category $ {\mathcal C} $ consisting of a class of  objects  written as $\nu {\mathcal C}$  and collection of morphisms $ f \in {\mathcal C(A,B)}$ from each object $ A = dom\; f $  to each object $ B = cod\; f $. For each pair $ (f,g) $ of morphisms with $ dom\; g=cod\; f $, a morphism $g\circ f : \dom f \, \rightarrow\,cod\; g$ is the composition $\circ$ and for each object $ a $ there exist a  unique morphism   $  1_A \in {\mathcal C}( A,A ) $ is  called the identity morphism on $a$. Further the composition satisfies $ h \circ ( g \circ f) = (h \circ g) \circ f $ whenever defined and $ f \circ 1_A = f = 1_B \circ  f $ for all $f \in  {\mathcal C} ( A,B) $.

\begin{example}
	\textbf{Set}\,:  objects are sets and morphisms are functions between sets.\\
	\textbf{Grp}\,:\,groups as objects and homomorphisms as morphisms\\
	$\textbf{Vct$ _{\textbf{K}} $} $ :  objects are the vector spaces over a fixed field $ K $ and morphisms are linear maps between them.	
\end{example}

	If a subcollection $ \mathcal {S} $ of objects and morphisms of $ \mathcal {C} $, itselfs consitute a category then $ \mathcal {S} $ is called a subcategory of $ \mathcal {C} $.\\
	
	Let $ \mathcal {C}$ and $ \mathcal {D} $ be two categories.
	A \textit{covariant functor} $F: \mathcal {C}\rightarrow \mathcal {D}$  consists of a vertex map which assigns each $ A\in \nu \mathcal {C}$ to an object $ F(A) \in \nu \mathcal {D}$ and a morphism map which assigns each morphism $f: A\rightarrow B,$ to  a morphism $ F(f):  F(A)\rightarrow  F(B) \in \mathcal D$ such that 	$ F(1_A) = 1_{\nu F(A)}$ for all $A \in \nu \mathcal C$ and $ F(f \circ g) = F(f) \circ F(g) $ for all morphisms $f, g \in \mathcal C$ for which the composition $f \circ g$ exists.

	A functor $F: \mathcal {C}\rightarrow \mathcal {D}$ is said to be \textit{full} if for every pair of objects $ A,B $ in $ \mathcal {C} $ the morphism set $ {\mathcal C} ( A,B)  $ is mapped surjectively by $ F $ onto $ {\mathcal D}(F(A),F(B)) $.
	A subcategory $ \mathcal {S} $ of $ \mathcal {C} $ is said to be \textit{full} if the inclusion functor from $ \mathcal {S} $ to $ \mathcal {C} $ is full.

\begin{definition}
	A morphism $ m : A \rightarrow B$ in a category $\mathcal{C}$ is a \textit{monomorphism} if $f_{1},f_{2} : D \rightarrow A$ in $\mathcal{C} $, the equality  $ m \circ f_{1} = m \circ f_{2} \Rightarrow f_{1}=f_{2}$, that is., $m$ is a monomorphism if it is left cancellable. Dually a morphism $e : A \rightarrow B $ is an \textit{epimorphism} if it is right cancellable. That is if $ g_{1}, g_{2} : b \rightarrow c $ ,\,$g_{1} \circ \,e\,=\,g\,_{2} \circ e  \Rightarrow g_{1}=g_{2}$.\\
\end{definition}

Note that in the category \textbf{Set} monomorphisms are percisely the injections and epimorphisms are precisely the surjections.

\begin{definition}
	An object $ T $ is \textit{terminal} in a category 
	$ {\mathcal C} $ if each object $ A $ in $ {\mathcal C} $ there is exactly one arrow $ A \rightarrow T $. An object $ S $ is \textit{initial}  in $ {\mathcal C} $ if each object $A $ there is exactly one arrow $ S \rightarrow A $. 
\end{definition}
	
	A \textit{zero object} $ 0 $ in a category $ {\mathcal C} $ is an object which is both initial and terminal. For any two objects $ A $ and $ B $ in $ {\mathcal C} $ there is a unique arrow $ 0_{A,B}\,:\, A \rightarrow 0 \rightarrow B  $ called the \textit{zero arrow} from $ A $ to $ B $.
	In the category \textbf{Set}, the empty set is an initial object and any one point set is a terminal object.

\begin{definition}
	Let $ {\mathcal C} $ be a category with zero objects,  \textit{kernel} of a morphism $  f : A \rightarrow B \in  {\mathcal C} $ is a pair $ (K, i) $ of an object $ K $ and a morphism $  i : K \rightarrow A $ such that $ f \circ i = 0 $ satisfying the \textit{universal property}, that is for any other  morphism $  i' : K' \rightarrow A $ with $ f \circ i' = 0 $ there exist a unique arrow $ h : K' \rightarrow K $ such that $ i \circ h = i' $.
\end{definition}

	Dually a \textit{cokernel} of a morphism $  f : A \rightarrow B $ is a pair $ (E, p) $ of an object $ E $ and a morphism $  p : B \rightarrow E $ such that $ p \circ f = 0 $ satisfying the universal property. 

\begin{definition}
	A \textit{product} of two object $ A $ and $ B $ in a category $ {\mathcal C} $ is an object $ A \,\Pi\, B   $ together with morphisms $ p_{1} : A \,\Pi\, B \rightarrow A $ and $ p_{2} : A \,\Pi\, B \rightarrow B $ that satises the
	universal property, viz., for some object $ C $ and any two morphisms $ f_{1} : C \rightarrow A , f_{2} : C \rightarrow B $, there exist a unique morphism $ h : C \rightarrow A \,\Pi\, B $ such that $ p_{i} \circ h = f_{i}  $ for $ i = 1,2 $.
\end{definition}
	
	Dually A \textit{coproduct} of two object $ A $ and $ B $ in a category $ {\mathcal C} $ is an object $ A\, \amalg \,B   $ together with morphisms $ i_{1} : A \rightarrow A\, \amalg \,B $ and $ i_{2} : B \rightarrow A\, \amalg \,B $ that satises the
	universal property: any two morphisms $ f_{1} : A \rightarrow C , f_{2} : B \rightarrow C $ for some object $ C $ there exist a unique morphism $ h : A\, \amalg \,B \rightarrow C $ such that $ h \circ i_{i} = f_{i}  $ for $ i = 1,2 $.\\

\begin{tikzpicture}
	\fill (2.5,2.5) circle (2pt) node[above] {$B$};
	\fill (0,2.5) circle (2pt) node[above] {$A \Pi B$};
	\fill (-2.5,2.5) circle (2pt) node[above] {$A$};
	\fill (0,0) circle (2pt) node[below] {$C$};
	\draw[->,  thick] (-.2,.2) -- (-2.3,2.3)node[midway, left]{$f_{1}$};
	\draw[->,  thick] (.2,.2) -- (2.3,2.3)node[midway, right]{$f_{2}$};
	\draw[->, dotted, very thick] (0,.2) -- (0,2.3)node[midway, right]{$h$};
	\draw[->,  thick] (-.2,2.5) -- (-2.3,2.5)node[midway, above]{$p_{1}$};
	\draw[->,  thick] (.2,2.5) -- (2.3,2.5)node[midway, above]{$p_{2}$};
\end{tikzpicture}
	\begin{tikzpicture}
	\fill (2.5,2.5) circle (2pt) node[above] {$B$};
	\fill (0,2.5) circle (2pt) node[above] {$A \amalg B$};
	\fill (-2.5,2.5) circle (2pt) node[above] {$A$};
	\fill (0,0) circle (2pt) node[below] {$C$};
	\draw[->,  thick] (-2.3,2.3) -- (-.2,.2)node[midway, left]{$g_{1}$};
	\draw[->,  thick] (2.3,2.3) -- (.2,.2) node[midway, right]{$g_{2}$};
	\draw[->, dotted, very  thick] (0,2.3) -- (0,.2)node[midway, right]{$h $};
	\draw[->,  thick] (-2.3,2.5) -- (-.2,2.5)node[midway, above]{$i_{1}$};
	\draw[->,  thick] (2.3,2.5)--(.2,2.5)  node[midway, above]{$i_{2}$};
\end{tikzpicture}

\begin{definition}
	A category $ {\mathcal C} $ is called \textit{preadditive category} or \textit{$ Ab $-category}  if each hom-set $ {\mathcal C(a,b)} $ is an additive abelian group and composition is bilinear: i.e., 
	$$ (g\,+\,g') \circ (f\,+\,f') = (g \circ f)\,+\, (g \circ f')\,+\, (g' \circ f)\,+\,(g' \circ f')$$
	where $ f,f' : a \rightarrow b \quad\text{and}\quad g,g' : b \rightarrow c$.
\end{definition}
	
		An \textit{additive category} is a preadditive category with a zero object in which every pair of objects admits a product and coproduct and an \textit{abelian category} is a additive category where every morphism admits a kernel and a cokernel, and
		every monomorphism is a kernel and every epimorphism is a cokernel. It is easy to see that the  Category of abelian groups $ \textbf{Ab} $, category of left $ R $ modules $ \textbf{R - Mod} $,  category of right $ R $ modules $ \textbf{Mod - R}$ are abelian categories.	\\
		A morphism $ e : A \rightarrow A $ in the category $ {\mathcal C} $ is called \textit{idempotent} if $ e^{2} = e $. An idempotent  $ e : A \rightarrow A $ is said to be a \textit{split idempotent} if there exist morphisms $ f : B \rightarrow A $ and $ g : A \rightarrow B $ in $ {\mathcal C} $ such that $ g \circ f = 1_{B} $ and $ f \circ g = e $.
\begin{definition}(cf.{\cite{hitchock}})	
	 A category $ {\mathcal C} $ is called \textit{idempotent complete} if all idempotents are spit idempotents.
\end{definition}

	A \textit{preorder} $ \mathcal {P} $ is a category such that for any $ p, p' \in \nu\mathcal {P}, \mathcal{P}(p, p')  $ contains atmost one morphism. In this case there is a quasi order relation $ \subseteq $ on $ \in \nu\mathcal {P} $ such that $ p \subseteq p' \iff \mathcal{P}(p, p') \neq \phi $. $ \mathcal{P} $ is said to be a strict preorder if $ \subseteq $ is a partial order (see .cf.\cite{kss}).

\begin{definition}(cf.\cite{kss}){\label{4}}
	Let $\mathcal{C}$ be a category and $ \mathcal{P} $ be a sub category of $\mathcal{C}$. The pair ($\mathcal{C}$, $\mathcal{P}$) is called \textit{ category with subobjects} if the following conditions hold:
\begin{itemize}
	\item $ \mathcal{P} $ is a strict preorder with $ \nu\mathcal{C} = \nu \mathcal{P} $.
	\item Every $ f \in \mathcal{P} $ is a monomorphism.
	\item If $ f , g \in\mathcal{P} $ and $ f = gh $ for some $ h \in \mathcal{C} $ then $ h \in \mathcal{P} $.
\end{itemize}	
	  Let $ C, D \in \nu\mathcal{C} $, we denote the unique morphism in $ \mathcal{P} $ from $ C \rightarrow D $ by $ j_{(C,D) }$ and is called  \textit{inclusion}. In this case $ C $ is referred to as a \textit{subobject} of $ D $.
\end{definition}
\begin{definition}(cf.\cite{kss})
	Let $ \mathcal{C} $ be a category with subobjects. A \textit{canonical factorization} of a morphism $ f $ in $ \mathcal{C} $ is a factorization of the form $ f = jq $ where $ q $ is an epimorphism and $ j $ is an inclusion.
\end{definition}

\begin{definition}(cf.{\cite{musili}})
	Let $ R $ be a ring. A \textit {left R - module} is an abelian group $ (M,+) $ together with a scalar multiplication $ R \,\times\,M \rightarrow M,\,(r,x) \mapsto rx $ such that:
	\begin{itemize}
		\item $ r(x+y) =rx + ry,\,\, \forall r\,\in R\, \text{and}\, x,y\,\in{M}$
		\item $ (r+r')x = rx +r'x,\,\,\forall r,r'\in R ,x\in{M}$
		\item $ (rr')x = r(r'x) ,\,\,\forall r,r'\in R ,x\in{M} $
	\end{itemize}
\end{definition}
Similary we can define right R module. If R is commutative left R module and right R module are the same.

	\section{Category of left ideals of a Noetherian ring}
Let $ R $ be a Noetherian ring with unity and $ \mathbb{L}_{R} $ be the collection of all left ideals of $ R $. Since  ideals of Noetherian rings are finitely generated, each left ideal in $ \mathbb{L}_{R} $ is of the form $ A = \langle a_{1}, a_{2}, ..., a_{n}\rangle_{l},\quad a_{i} \in R $ for all $i = 1, 2, ..., n$. It is easy to observe that $ \mathbb{L}_{R} $ is a category whose objects left ideals of $ R $ and morphisms are $ R $ -linear transformations.
i.e. for any  $ A, B \in\nu \mathbb{L}_{R}$ and $ f \in \mathbb{L}_{R}(A,B) $, then $ f $  satisfies the conditions
$$	f (x + y) = f (x) + f (y)$$
$$f ( rx) = r f (x)\,\, \forall x , y \in{A} , r\,\in{R}.$$
Since composition of $ R $- linear transformations is again $ R $- linear, the composition of morphisms in the category is the usual set composition of $ R $ linear maps and $ 1_A $
is the identity map on $ A $.

\begin{theorem}
	Let $ R $ be a Noetherian ring with unity. The category $ \mathbb{L}_{R} $, of all left ideals of $ R $ is a preadditive category with zero object.
\end{theorem}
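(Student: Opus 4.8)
The plan is to establish the two defining features separately: first that every hom-set carries an abelian group structure for which composition is bilinear, and then that the zero ideal serves as a zero object. Both reduce to the module and abelian-group axioms, so the work is mostly verification.

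First I would equip each hom-set $\mathbb{L}_R(A,B)$ with pointwise addition: for $f, g \in \mathbb{L}_R(A,B)$ set $(f+g)(x) = f(x) + g(x)$ for all $x \in A$. The first thing to check is that $f+g$ is again an object of the hom-set, i.e. an $R$-linear map $A \to B$. Since $B$ is a left ideal it is in particular an additive subgroup of $R$, so $f(x)+g(x) \in B$; additivity and the scalar condition $(f+g)(rx) = r(f+g)(x)$ follow directly from the corresponding properties of $f$ and $g$. Next I would confirm that $(\mathbb{L}_R(A,B), +)$ is an abelian group: associativity and commutativity are inherited from $(B,+)$, the zero morphism $0_{A,B}$ sending every element to $0$ is the additive identity, and the map $-f$ given by $(-f)(x) = -f(x)$, again $R$-linear, is the additive inverse of $f$.

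The second ingredient is bilinearity of composition. Given $f, f' : A \to B$ and $g, g' : B \to C$, I would evaluate $\big((g+g')\circ(f+f')\big)(x)$ by expanding pointwise and using the additivity of $g$ and $g'$, obtaining $g(f(x)) + g(f'(x)) + g'(f(x)) + g'(f'(x))$; recognising this as $\big((g\circ f) + (g\circ f') + (g'\circ f) + (g'\circ f')\big)(x)$ yields exactly the bilinearity identity in the definition of a preadditive category. This completes the verification that $\mathbb{L}_R$ is preadditive.

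Finally, for the zero object I would take the zero ideal $\{0\}$, which is a left ideal and hence an object of $\mathbb{L}_R$. For any object $A$, every $R$-linear map preserves $0$, so the only morphism $\{0\} \to A$ is the one sending $0 \mapsto 0$, making $\{0\}$ initial; dually, the only $R$-linear map $A \to \{0\}$ is the constant map to $0$, making $\{0\}$ terminal, so $\{0\}$ is a zero object. I expect no serious obstacle, since each step is a direct appeal to the axioms; the only points demanding a little care are checking that pointwise sums and negatives of $R$-linear maps remain $R$-linear and land inside the codomain ideal, and confirming the uniqueness of the morphisms into and out of $\{0\}$.
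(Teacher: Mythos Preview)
Your proposal is correct and follows essentially the same approach as the paper: pointwise addition on hom-sets, verification of the abelian group axioms and bilinearity, and identification of the zero ideal as the zero object. If anything you are slightly more careful, since you check both initiality and terminality of $\{0\}$ whereas the paper only verifies the terminal side before declaring it a zero object.
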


\begin{proof}
	Consider $ A, B \in\nu \mathbb{L}_{R}$ and $ f, g \in \mathbb{L}_{R}(A,B) $. 
	Define $$ (f + g) (x) = f (x) + g (x) \quad\text{for all}\quad x \in {A}$$
	then
\begin{equation*}
	\begin{split}
	(f + g) (x + y) & = f (x + y) + g (x + y ) = f (x) + f (y) + g (x) + g (y)\\
	&=f (x) + g (x) + f (y) + g (y)\\
	& = ( f + g )(x) + ( f + g )(y)
		\end{split}
	\end{equation*}
$$ ( f + g )(rx) = f (rx) + g (rx ) = rf (x) + rg (x ) = r ( f + g )(x)$$	 
that is, $ f + g \in \mathbb{L}_{R}(A,B) $. Since the zero map is $ R $- linear and belongs to	$ \mathbb{L}_{R}(A,B)  $ it is the identity and for each $ f \in \mathbb{L}_{R}(A,B)  $,	let  $ (-f)(x) = - f(x) $ then $ - f \in \mathbb{L}_{R}(A,B) $ and is the invers element. 
	Hence $  \mathbb{L}_{R}(A,B) $ is an Abelian group under above defined addition.	
For any $  f_{1},f_{2} \in \mathbb{L}_{R}(A,B) g_{1},g_{2} \in \mathbb{L}_{R}(B,C) $, $$(g_{1}\,+\,g_{2}) \circ (f_{1}\,+\,f_{2}) = (g_{1} \circ f_{2})\,+\, (g_{1} \circ f_{2})\,+\, (g_{2} \circ f_{1})\,+\,(g_{2} \circ f_{2})$$	
i.e., the composition is bilinear. Hence $ \mathbb{L}_{R} $ is a preadditive category.\\	
 Let $ O $ be the zero ideal. For any $ A  \in \nu \mathbb{L}_{R} $ there is exactly one arrow in $ \mathbb{L}_{R}(A,O)  $ and so $ O $ is the zero object in $ \mathbb{L}_{R} $, that is 
 $ \mathbb{L}_{R} $ is a preadditive category with zero object. 
\end{proof}
This category $ \mathbb{L}_{R} $ is a subcategory of  the category of left  $ R $-modules  $ R-Mod $ and it is easy to see that the inclusion functor $ i : \mathbb{L}_{R} \rightarrow R-Mod $ is full. Similarly it is seen that $ \mathbb{R}_{R} $, the collection of all right ideals of $ R $ is a  preadditive category with zero object and is a full sub category of the category of right  $R$-modules  $Mod - R $.

\begin{theorem}
	Let $ R $ be a Noetherian ring. In the category $ \mathbb{L}_{R} $,  of all left ideals of $ R $ biproduct exist only for ideals with trivial intersection.
\end{theorem}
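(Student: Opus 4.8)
The plan is to establish the statement as an equivalence: two left ideals $A,B\in\nu\mathbb{L}_R$ admit a biproduct in $\mathbb{L}_R$ exactly when $A\cap B=O$. By the previous theorem $\mathbb{L}_R$ is preadditive with zero object $O$, so a biproduct of $A$ and $B$ is an object $C\in\nu\mathbb{L}_R$ equipped with morphisms $i_1:A\to C$, $i_2:B\to C$, $p_1:C\to A$, $p_2:C\to B$ subject to
\[
p_1 i_1=1_A,\quad p_2 i_2=1_B,\quad p_1 i_2=0,\quad p_2 i_1=0,\quad i_1 p_1+i_2 p_2=1_C.
\]
The natural candidate for $C$ is the internal sum $A+B$, which is again a left ideal and hence lies in $\nu\mathbb{L}_R$; the argument hinges on comparing this internal sum against the constraints imposed by the five identities.

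For the forward implication I would assume $A\cap B=O$ and take $C=A+B$ with $i_1=j_{(A,C)}$ and $i_2=j_{(B,C)}$ the inclusions. The point of the hypothesis is that $A\cap B=O$ forces every $x\in C$ to have a unique expression $x=a+b$ with $a\in A$, $b\in B$; this is precisely what makes $p_1(a+b)=a$ and $p_2(a+b)=b$ well defined $R$-linear maps. I would then read off the five identities directly: $p_1 i_1=1_A$ and $p_2 i_2=1_B$ are immediate, $p_1 i_2=0=p_2 i_1$ express that the $A$- and $B$-components of a pure summand vanish, and $i_1 p_1+i_2 p_2=1_C$ simply records $x=a+b$. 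This exhibits $A+B$ as a biproduct.

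For the converse I would suppose a biproduct $(C,i_1,i_2,p_1,p_2)$ exists, with the injections realised as the subobject inclusions of $A$ and $B$ into $C$. From $p_2 i_1=0$ I get $A=\operatorname{im} i_1\subseteq\ker p_2$, and from $p_1 i_2=0$ I get $B=\operatorname{im} i_2\subseteq\ker p_1$. The identity $i_1 p_1+i_2 p_2=1_C$ then gives $\ker p_1\cap\ker p_2=O$, since any $c$ annihilated by both $p_1$ and $p_2$ satisfies $c=i_1 p_1(c)+i_2 p_2(c)=0$. Combining these, $A\cap B\subseteq\ker p_2\cap\ker p_1=O$, so $A\cap B=O$, as required.

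The step I expect to be the main obstacle is the converse, and specifically the legitimacy of taking the biproduct injections to be the canonical inclusions $j_{(A,C)},j_{(B,C)}$. The five identities only determine $C$ up to the isomorphism $C\cong A\oplus B$ induced by $c\mapsto(p_1 c,p_2 c)$, and a priori $i_1,i_2$ are abstract monomorphisms rather than honest set inclusions; so the care must go into arguing, within the subobject structure of $\mathbb{L}_R$ (Definition \ref{4}), that the images of $A$ and $B$ under any biproduct injections realise $A$ and $B$ as subobjects of $C$ meeting only in $O$. I would address this by transporting the structure along $c\mapsto(p_1 c,p_2 c)$ and observing that the well-definedness of the resulting projection $p_2$ on the internal sum $A+B$ already fails for a nonzero $x\in A\cap B$, which is simultaneously forced to satisfy $p_2(x)=0$ and $p_2(x)=x$.
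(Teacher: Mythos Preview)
Your forward direction is correct and is essentially the paper's argument: both take $C=A+B$ with the two inclusions and the two projections $a+b\mapsto a$, $a+b\mapsto b$ (well defined precisely because $A\cap B=\{0\}$). You verify the five biproduct identities, while the paper verifies the product and coproduct universal properties separately; in a preadditive category these are equivalent, and the underlying construction is the same.

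For the converse (the word \emph{only} in the statement), the paper gives no argument at all: its proof establishes existence under the hypothesis $A\cap B=\{0\}$ and then simply asserts the ``only'' in its final sentence. So here you are attempting more than the paper does. You also correctly isolate the weak point of your own argument, namely the assumption that the biproduct injections may be taken to be the subobject inclusions $j_{(A,C)},j_{(B,C)}$. This is not a mere technicality that can be patched by ``transporting the structure''; the converse is in fact false. Take $R=M_2(k)$ for a field $k$ (a Noetherian ring) and set $A=B=Re_{11}$, so that $A\cap B=Re_{11}\neq 0$. Then $R$ itself is a biproduct of $A$ and $B$ in $\mathbb{L}_R$: let $i_1$ be the inclusion $Re_{11}\hookrightarrow R$, let $i_2$ be right multiplication by $e_{12}$, let $p_1$ be right multiplication by $e_{11}$, and let $p_2$ be right multiplication by $e_{21}$. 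A direct check gives $p_1i_1=1_A$, $p_2i_2=1_A$, $p_1i_2=p_2i_1=0$, and $i_1p_1+i_2p_2=1_R$. The failure is exactly where you pointed: $i_2$ is a split monomorphism whose image $Re_{22}$ is isomorphic to $B$ as a left $R$-module without being equal to $B$, so neither your containment $B\subseteq\ker p_1$ nor your final ``$p_2(x)=0$ and $p_2(x)=x$'' argument applies to an element of $A\cap B$.
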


\begin{proof}
	Let $ A, B \in\nu \mathbb{L}_{R}$ with $ A \cap B = \{0 \} $. Then $ A + B \in\nu \mathbb{L}_{R} $ and since  $ A \cap B = \{0 \} $ every element $ x \in A + B $ can be uniquely expressed as $ x = a + b $ where $ a \in A $ and $ b \in B $. Define $ p_{1}: A + B \rightarrow A $ and $ p_{2}: A + B \rightarrow B $ respectively as $  p_{1} (x) = a $ and $  p_{2} (x) = b$, for all $x = a + b \in A + B$. Clearly $ A + B  $ together with $  p_{1}  $ and $  p_{2}$ constitute the product in left  ideal category $ \mathbb{L}_{R} $. It has the universal property that : for any object $ C \in\nu \mathbb{L}_{R}$ and morphisms  $ f_{1}: C \rightarrow A $ and $ f_{2}: C \rightarrow B $ there exist a unique map $h: C \rightarrow A + B $ as $ h(x) = f_{1}(x) + f_{2}(x)  \forall x \in{C}$ such that the following diagram commutes.
	\begin{center}
		
		\begin{tikzpicture}
			\fill (2.5,2.5) circle (2pt) node[above] {$B$};
			\fill (0,2.5) circle (2pt) node[above] {$A + B$};
			\fill (-2.5,2.5) circle (2pt) node[above] {$A$};
			\fill (0,0) circle (2pt) node[below] {$C$};
			\draw[->,  thick] (-.2,.2) -- (-2.3,2.3)node[midway, left]{$f_{1}$};
			\draw[->,  thick] (.2,.2) -- (2.3,2.3)node[midway, right]{$f_{2}$};
			\draw[->, dotted, very thick] (0,.2) -- (0,2.3)node[midway, right]{$h$};
			\draw[->,  thick] (-.2,2.5) -- (-2.3,2.5)node[midway, above]{$p_{1}$};
				\draw[->,  thick] (.2,2.5) -- (2.3,2.5)node[midway, above]{$p_{2}$};
		\end{tikzpicture}
		
	\end{center}
Dually we can define morphism  $ i_{1}: A \rightarrow A + B $	and $ i_{2}: B\rightarrow A + B $ respectively as $  i_{1} (a) = a $ and $  i_{2} (b) = b , \forall a  \in A, \forall b  \in B $ . Then $ A + B  $ together with $  i_{1}  $ and $  i_{2}$ constitute the coproduct in left  ideal category $ \mathbb{L}_{R} $. It has the universal property that : for any object $ D \in\nu \mathbb{L}_{R}$ and morphisms  $ g_{1}: A \rightarrow D $ and $ g_{2}: B \rightarrow D $ there exist a unique map $h': A + B \rightarrow D $ as $ h(x) = g_{1}(a) + g_{2}(b) \,\, \forall x = a + b \in{A + B}$ such that the following diagram commutes.
		\begin{center}
		
		\begin{tikzpicture}
			\fill (2.5,2.5) circle (2pt) node[above] {$B$};
			\fill (0,2.5) circle (2pt) node[above] {$A + B$};
			\fill (-2.5,2.5) circle (2pt) node[above] {$A$};
			\fill (0,0) circle (2pt) node[below] {$C$};
			\draw[->,  thick] (-2.3,2.3) -- (-.2,.2)node[midway, left]{$g_{1}$};
			\draw[->,  thick] (2.3,2.3) -- (.2,.2) node[midway, right]{$g_{2}$};
			\draw[->, dotted, very  thick] (0,2.3) -- (0,.2)node[midway, right]{$h $};
			\draw[->,  thick] (-2.3,2.5) -- (-.2,2.5)node[midway, above]{$i_{1}$};
			\draw[->,  thick] (2.3,2.5)--(.2,2.5)  node[midway, above]{$i_{2}$};
		\end{tikzpicture}
		
	\end{center}
so in the category $ \mathbb{L}_{R} $  product and coproduct (i.e biproduct) exist only for ideals with trivial intersection.
\end{proof}
The following proposition is recalled as it is of interest in the context of  $ R $-modules categories.

\begin{proposition}(cf.\cite{sam}){\label{1}}
	Let $ \mathcal{C} $ be an additive category and $ \mathcal{D} $ be a full subcategory of $ \mathcal{C} $. If $ \mathcal{D} $ has a zero object and is closed under binary biproduct, then $ \mathcal{D} $ with morphism addition inherited from $ \mathcal{C} $ is an additive category.
\end{proposition}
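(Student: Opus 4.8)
The plan is to verify directly the three defining ingredients of an additive category for $\mathcal{D}$, namely that it is preadditive, that it has a zero object, and that every pair of its objects admits both a product and a coproduct. The zero object is supplied by hypothesis, so the work reduces to establishing the preadditive structure and the existence of binary products and coproducts inside $\mathcal{D}$. Throughout, the decisive fact is that $\mathcal{D}$ is \emph{full}: for any $A, B \in \nu\mathcal{D}$ one has $\mathcal{D}(A,B) = \mathcal{C}(A,B)$, so the hom-sets, their addition, and the composition law all coincide with those of $\mathcal{C}$.

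First I would treat the preadditive structure. Since $\mathcal{C}$ is additive it is in particular preadditive, so each $\mathcal{C}(A,B)$ is an abelian group and composition in $\mathcal{C}$ is bilinear. By fullness $\mathcal{D}(A,B) = \mathcal{C}(A,B)$, so the inherited addition makes each $\mathcal{D}(A,B)$ an abelian group with no closure issue to check: the sum $f+g$ of two morphisms of $\mathcal{D}$ is a morphism of $\mathcal{C}$ between objects of $\mathcal{D}$, hence already lies in $\mathcal{D}$. Because composition in $\mathcal{D}$ is by definition the restriction of composition in $\mathcal{C}$, bilinearity transfers verbatim. This shows $\mathcal{D}$ is a preadditive category.

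Next I would handle products and coproducts using the hypothesis that $\mathcal{D}$ is closed under binary biproduct. For $A, B \in \nu\mathcal{D}$ the biproduct $A \oplus B$ formed in $\mathcal{C}$ lies in $\nu\mathcal{D}$, and its projections $p_1, p_2$ and injections $i_1, i_2$, being morphisms of $\mathcal{C}$ between objects of $\mathcal{D}$, lie in $\mathcal{D}$ by fullness. To see $A \oplus B$ is a product in $\mathcal{D}$, take any $C \in \nu\mathcal{D}$ with $f_1 : C \to A$ and $f_2 : C \to B$ in $\mathcal{D}$; viewing these in $\mathcal{C}$, the universal property of the product in $\mathcal{C}$ yields a unique $h : C \to A \oplus B$ with $p_k \circ h = f_k$. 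Fullness places $h$ in $\mathcal{D}$, and any mediating morphism in $\mathcal{D}$ is \emph{a fortiori} one in $\mathcal{C}$, so uniqueness is inherited. The coproduct property follows by the dual argument using $i_1, i_2$. Hence every pair of objects of $\mathcal{D}$ has a product and a coproduct, and combined with the previous paragraph and the given zero object this establishes that $\mathcal{D}$ is additive.

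The argument involves no hard computation; the one point deserving care — and the step I expect to be the only real obstacle — is the transfer of the universal properties. It is precisely fullness that does the two things needed: it guarantees that the mediating morphism produced in $\mathcal{C}$ actually belongs to $\mathcal{D}$, and it lets uniqueness descend from $\mathcal{C}$ to $\mathcal{D}$, since any two solutions in $\mathcal{D}$ are also solutions in $\mathcal{C}$, where there is only one. Without fullness one could lose either the mediating map or its uniqueness, so I would invoke fullness explicitly at each of these points rather than treat the biproduct's universal property as automatic.
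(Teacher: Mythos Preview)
Your argument is correct and is exactly the standard verification: fullness transports the abelian-group structure on hom-sets and bilinearity of composition, and closure under biproducts together with fullness transports the universal properties of products and coproducts into $\mathcal{D}$. The care you take in isolating the two uses of fullness (existence of the mediating morphism in $\mathcal{D}$, and inheritance of uniqueness) is appropriate and complete.

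There is, however, nothing in the paper to compare against: the proposition is merely \emph{recalled} from the cited reference \cite{sam} and stated without proof. The paper only uses it as a black box to conclude that $\mathbb{L}_R$, lacking closure under biproducts, is not forced to be additive by this criterion. So your proposal supplies a proof the paper does not attempt; as such it is a correct self-contained justification of the quoted result rather than a reconstruction of anything in the text.
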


Since the category $ \mathbb{L}_{R} $ is a full sub category of $ R - Mod $ category and 
$ \mathbb{L}_{R} $ is a preadditive category with zero object, by proposition \ref{1} we can conclude that $ \mathbb{L}_{R} $ is only a preadditive category. 

\begin{theorem}{\label{2}}
	Let $ R $ be a Noetherian ring. Then every  morphism in category $ \mathbb{L}_{R} $   admits a kernel.
\end{theorem}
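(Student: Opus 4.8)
The plan is to realise the kernel of a morphism $ f \in \mathbb{L}_{R}(A,B) $ by its underlying set-theoretic kernel, exactly as one does in the module category, and then to check that this object genuinely lives in $ \mathbb{L}_{R} $. First I would set
$$ K = \{\, x \in A : f(x) = 0 \,\} $$
and verify that $ K $ is a left ideal of $ R $. Since $ f $ is additive, $ K $ is an additive subgroup of $ A $; and since $ f $ is $ R $-linear, for $ r \in R $ and $ x \in K $ we have $ f(rx) = r f(x) = 0 $, so $ rx \in K $. Thus $ K $ is closed under the left $ R $-action, and being a subset of the left ideal $ A \subseteq R $ it is itself a left ideal of $ R $. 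As $ R $ is Noetherian, $ K $ is moreover finitely generated, so $ K \in \nu \mathbb{L}_{R} $ in accordance with the description of objects in this category.

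Next I would take $ i : K \rightarrow A $ to be the inclusion map, which is $ R $-linear and hence a morphism of $ \mathbb{L}_{R} $. By construction $ f(i(x)) = f(x) = 0 $ for every $ x \in K $, so $ f \circ i = 0 $, the zero arrow from $ K $ to $ B $. It then remains to establish the universal property. Given any $ i' : K' \rightarrow A $ in $ \mathbb{L}_{R} $ with $ f \circ i' = 0 $, every $ y \in K' $ satisfies $ f(i'(y)) = 0 $, so $ i'(y) \in K $; this allows me to define $ h : K' \rightarrow K $ by $ h(y) = i'(y) $, which is $ R $-linear because $ i' $ is, and which satisfies $ i \circ h = i' $. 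Uniqueness of $ h $ follows because $ i $, being an inclusion, is a monomorphism: if $ i \circ h' = i' = i \circ h $ then $ h' = h $. Hence $ (K, i) $ is a kernel of $ f $.

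The substantive point — and essentially the only place the hypotheses enter — is the verification in the first step that the set-theoretic kernel is again an object of $ \mathbb{L}_{R} $, rather than merely a submodule lying outside the category. This is where $ R $-linearity of $ f $ is essential, since it forces closure of $ K $ under the left $ R $-action, and where the Noetherian hypothesis manifests, since it guarantees that $ K $, like every left ideal, is finitely generated. Once this closure is secured, everything else is the standard kernel argument transported along the full inclusion $ \mathbb{L}_{R} \hookrightarrow R $-Mod: kernels of maps between ideals are computed exactly as in $ R $-Mod and happen to remain ideals, so no genuine difficulty beyond this closure check is expected.
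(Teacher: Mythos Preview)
Your proof is correct and follows essentially the same approach as the paper: form the set-theoretic kernel, observe it is a left ideal of $R$ (hence an object of $\mathbb{L}_{R}$), take the inclusion as the kernel morphism, and verify the universal property by factoring any $i'$ with $f\circ i'=0$ through it. If anything, your version is more careful than the paper's, which asserts that $\ker f$ is an ideal without spelling out the closure argument and does not explicitly invoke the Noetherian hypothesis for finite generation.
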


\begin{proof}
	Let $ f : A \rightarrow B $ be an arrow in $ \mathbb{L}_{R} $. Then $ ker f = \{ x \in{A} : f(x) = 0\} $ is an ideal of $ R $ and $ ker f \in \nu \mathbb{L}_{R} $. Consider the inclusion map $ i : ker f \rightarrow A $. Clearly $ f \circ i = 0 $ and the pair 
	$ (ker f , i) $ is a kernal and admits the universal property,  that for any other pair 
	$ (K , j) $ where $ K $ is an object in $ \mathbb{L}_{R} $ and $ j : K \rightarrow A $ is a morphism with $ f \circ j = 0 $, there exist a unique morphism $ h : K \rightarrow A $ defined by $ h(x) = j (x) \, \forall x \in K $ such that the following diagram commutes.

	\begin{center}
	
	\begin{tikzpicture}
		\fill (2.5,2.5) circle (2pt) node[above] {$B$};
		\fill (0,2.5) circle (2pt) node[above] {$A$};
		\fill (-2.5,2.5) circle (2pt) node[above] {$ ker f $};
		\fill (0,0) circle (2pt) node[below] {$K$};
		\draw[->, dotted, very  thick] (-.2,.2) -- (-2.3,2.3)node[midway, left]{$ h $};
		\draw[->,  thick] (0,.2) -- (0,2.3)node[midway, right]{$j$};
		\draw[->,  thick] (-2.3,2.5) -- (-.2,2.5)node[midway, above]{$i$};
		\draw[->,  thick] (.2,2.5) --(2.3,2.5)  node[midway, above]{$f$};
	\end{tikzpicture}
	
\end{center}	

\end{proof} 

\begin{theorem}
	Let $ R $ be a Noetherian ring and  $ \mathbb{L}_{R} $ be the category of all left ideals of $ R $. Then only zero map and surjective morphism of $ \mathbb{L}_{R} $  admits a cokernel.
\end{theorem}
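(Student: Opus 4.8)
The plan is to reduce the computation of cokernels in $\mathbb{L}_{R}$ to the familiar cokernels in the ambient category $R$-Mod, of which $\mathbb{L}_{R}$ is a full subcategory, and then to decide when the resulting object actually lies in $\mathbb{L}_{R}$. Recall that for an $R$-linear map $f : A \rightarrow B$ the cokernel in $R$-Mod is the quotient $B/\mathrm{im}\, f$ together with the canonical projection $q : B \rightarrow B/\mathrm{im}\, f$. First I would dispose of the two asserted cases by verifying the universal property directly. If $f$ is the zero map then $\mathrm{im}\, f = \{0\}$, so the candidate cokernel is $(B, 1_{B})$: any $p' : B \rightarrow E'$ with $p' \circ f = 0$ factors uniquely as $p' = p' \circ 1_{B}$, giving the required unique $h$. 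If $f$ is surjective then $\mathrm{im}\, f = B$, so the candidate is the zero object $(O, 0_{B,O})$: here $p' \circ f = 0$ together with surjectivity of $f$ forces $p' = 0$, which factors uniquely through $O$. Thus zero maps and surjections do admit cokernels.

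For the converse I would argue contrapositively. Suppose $f : A \rightarrow B$ admits a cokernel $(E, p)$ in $\mathbb{L}_{R}$. A standard consequence of the universal property is that the cokernel projection $p : B \rightarrow E$ is an epimorphism; moreover, corestricting $p$ to its image $\mathrm{im}\, p$ (which is again a left ideal, being a submodule of the left ideal $E$) and invoking the uniqueness clause of the universal property shows $\mathrm{im}\, p = E$, so that $p$ is genuinely surjective and $E \cong B/\ker p$ with $\mathrm{im}\, f \subseteq \ker p$. The goal is then to conclude that the only possibilities are $\mathrm{im}\, f = \{0\}$ (so $f = 0$) or $\mathrm{im}\, f = B$ (so $f$ surjective).

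The main obstacle is precisely this last step: one must show that when $\{0\} \subsetneq \mathrm{im}\, f \subsetneq B$ the proper nonzero quotient $B/\mathrm{im}\, f$ cannot be realised, up to $R$-isomorphism, by a left ideal of $R$, for then no object of $\mathbb{L}_{R}$ can carry the cokernel projection. This is the delicate point, and it is where the ring-theoretic hypotheses on $R$ must genuinely be used: for a general Noetherian ring a proper nonzero quotient of an ideal can itself be isomorphic to an ideal (as happens, for instance, when $R$ splits as a nontrivial direct product and $\mathrm{im}\, f$ is one of the factors), so the statement requires one to isolate and exploit the structural feature of $R$ that rules such quotients out. I would therefore concentrate the remaining work on identifying that feature and showing that, under it, every proper nonzero quotient of a left ideal of $R$ fails to embed as a left ideal, thereby forbidding the cokernel in all cases except the two listed.
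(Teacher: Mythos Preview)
Your treatment of the two positive cases (zero map and surjection) is essentially the paper's argument: the paper exhibits $(B/f(A),p)$ with the usual projection and checks the universal property in those two situations, and that is \emph{all} the paper proves. No argument whatsoever is offered for the ``only'' direction.

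Your caution about the converse is therefore entirely justified, and in fact the obstacle you isolate is fatal rather than merely delicate: the ``only'' claim is false as stated, and your direct-product intuition already produces counterexamples. Concretely, take the paper's own Example $R=\mathbb{Z}_6$ and the morphism $f:\langle 1\rangle\to\langle 1\rangle$ given by multiplication by~$3$. This is neither zero nor surjective, yet $\mathrm{im}\,f=\langle 3\rangle$ and $R/\langle 3\rangle\cong\langle 2\rangle$ as $R$-modules, so $(\langle 2\rangle,\ \rho_{(1,2,2)})$ is a cokernel of $f$ inside $\mathbb{L}_R$. Hence the programme you sketch for the converse---showing that a proper nonzero quotient of a left ideal can never be realised as a left ideal---cannot be carried out for arbitrary Noetherian $R$; no additional hypothesis in the paper rules such examples out, and the paper does not supply one. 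The honest conclusion is that the paper establishes only the existence of cokernels for zero and surjective maps, matching the first half of your proposal, while the ``only'' assertion is unproved there and in fact incorrect.
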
	
	
\begin{proof}
		Let $ f : A \rightarrow B $ be an arrow in $ \mathbb{L}_{R} $. If $ f = 0 $, then 
		$  B/ f (A)   \cong B $ is an ideal and so $ B/ f (A) \in \nu \mathbb{L}_{R}$. 		  If $ f $ is a surjective map then $  B/ f (A)   \cong $ trivial ideal $ \in \nu \mathbb{L}_{R} $. In these two cases the pair $ ( B/ f (A) , p) $, where $ p : B \rightarrow B/ f (A) $ the usual projection map will give the cokernel. It has the universal property, since for any pair $ (E , q) $ where $ E $ is an object in $ \mathbb{L}_{R} $ and $ q : B \rightarrow E $ is a morphism with $ q \circ f = 0 $,  there exist a unique morphism $ h : B/ f (A) \rightarrow E $ defined by $ h(x) = q (b) ,\, \forall x = b + f (A) \in B/ f (A) $ such that the following diagram commutes.
		
			\begin{center}
			
			\begin{tikzpicture}
				\fill (2.5,2.5) circle (2pt) node[above] {$B / f (A)$};
				\fill (0,2.5) circle (2pt) node[above] {$ B $};
				\fill (-2.5,2.5) circle (2pt) node[above] {$ A $};
				\fill (0,0) circle (2pt) node[below] {$ E $};
				\draw[->, dotted, very  thick] (2.3,2.3) -- (.2,.2)node[midway, left]{$ h $};
				\draw[->,   thick] (0,2.3) -- (0,.2)node[midway, right]{$q$};
				\draw[->,  thick] (-2.3,2.5) -- (-.2,2.5)node[midway, above]{$ f $};
				\draw[->,  thick] (.2,2.5) --(2.3,2.5)  node[midway, above]{$p$};
			\end{tikzpicture}
			
		\end{center}	 
		
\end{proof}	

\begin{lemma}(cf.{\cite{hitchock}}){\label{3}}
	If $ {\mathcal C} $ a preadditive category  then the following are equivalent: 
\begin{enumerate}
	\item $ {\mathcal C} $ idempotent complete.
	\item All idempotents have kernel.
	\item All idempotents have cokerne.
\end{enumerate}
\end{lemma}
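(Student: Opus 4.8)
The plan is to establish the cycle of equivalences by proving $(1) \Leftrightarrow (2)$ directly and then deducing $(1) \Leftrightarrow (3)$ by passing to the opposite category. The whole argument rests on one algebraic observation that is available precisely because $\mathcal{C}$ is preadditive: if $e : A \to A$ is idempotent, then so is $1_A - e$, since $(1_A - e)^2 = 1_A - 2e + e^2 = 1_A - e$, and moreover $e(1_A - e) = (1_A - e)e = e - e^2 = 0$. Thus the analysis of $e$ and of $1_A - e$ are interchangeable, and a splitting of one will be tied to a kernel of the other.

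First I would prove $(1) \Rightarrow (2)$. Given an idempotent $e$, idempotent completeness splits $1_A - e$, yielding $f : B \to A$ and $g : A \to B$ with $gf = 1_B$ and $fg = 1_A - e$. I claim $(B,f)$ is a kernel of $e$. Indeed, writing $e = 1_A - fg$ gives $ef = f - f(gf) = f - f = 0$, and for any $j : K \to A$ with $ej = 0$ one has $(1_A - e)j = j$, so that $h := gj$ satisfies $fh = fgj = (1_A - e)j = j$; uniqueness is immediate because $gf = 1_B$ makes $f$ left cancellable. Conversely, for $(2) \Rightarrow (1)$ I would, given idempotent $e$, take a kernel $(B,f)$ of the idempotent $1_A - e$. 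Since $(1_A - e)e = 0$, the universal property produces a unique $g : A \to B$ with $fg = e$. Because $f$ is the kernel map we have $(1_A - e)f = 0$, i.e.\ $ef = f$, whence $f(gf) = (fg)f = ef = f = f\,1_B$; as kernel morphisms are monomorphisms, this forces $gf = 1_B$, so $e$ splits. This closes $(1) \Leftrightarrow (2)$.

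For $(1) \Leftrightarrow (3)$ I would invoke duality rather than repeat the computation. Splitting of an idempotent is a self-dual condition, so $\mathcal{C}$ is idempotent complete if and only if $\mathcal{C}^{\mathrm{op}}$ is; moreover a cokernel in $\mathcal{C}$ is exactly a kernel in $\mathcal{C}^{\mathrm{op}}$, and $\mathcal{C}^{\mathrm{op}}$ is again preadditive. Applying the already-established equivalence $(1) \Leftrightarrow (2)$ to $\mathcal{C}^{\mathrm{op}}$ then yields $(1) \Leftrightarrow (3)$ for $\mathcal{C}$ with no further work.

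The step I expect to demand the most care is the verification that $(B,f)$ genuinely satisfies the kernel universal property in $(1) \Rightarrow (2)$ — in particular confirming $ef = 0$ and constructing the mediating morphism from the identity $(1_A - e)j = j$ — together with the complementary appeal in $(2) \Rightarrow (1)$ to the fact that kernel maps are monic, which is what upgrades $f(gf) = f$ to $gf = 1_B$. Everything else amounts to bookkeeping with the two relations $gf = 1_B$ and $fg = 1_A - e$, and the transition to the opposite category is purely formal.
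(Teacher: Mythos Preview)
Your argument is correct: the cycle $(1)\Leftrightarrow(2)$ via the complementary idempotent $1_A-e$, together with the duality reduction for $(1)\Leftrightarrow(3)$, is the standard and fully rigorous route, and every step you outline (including the verification that kernel maps are monic so that $f(gf)=f$ forces $gf=1_B$) goes through as written.

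As for comparison with the paper: there is nothing to compare. The paper does not prove this lemma at all; it is stated with a citation to Hitchcock and used as a black box in the subsequent theorem. So your proposal supplies strictly more than the paper does here, and nothing in it conflicts with the paper's usage.
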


\begin{theorem}
		Let $ R $ be a Noetherian ring. In the category $ \mathbb{L}_{R} $,  of all left ideals of $ R $  is idempotent complete
\end{theorem}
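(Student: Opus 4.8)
The plan is to reduce the statement to the machinery already assembled in this section, since the two preceding results do almost all of the work. Recall that $\mathbb{L}_{R}$ is a preadditive category with zero object (established earlier in this section), and that by Theorem \ref{2} every morphism of $\mathbb{L}_{R}$ admits a kernel. In particular every idempotent endomorphism $e : A \rightarrow A$ in $\mathbb{L}_{R}$, being a morphism, has a kernel, so condition (2) of Lemma \ref{3} is satisfied. Applying that lemma with $\mathcal{C} = \mathbb{L}_{R}$ immediately yields condition (1), namely that $\mathbb{L}_{R}$ is idempotent complete. At the level of the cited results this is the entire argument, so strictly speaking there is no serious obstacle once Theorem \ref{2} and Lemma \ref{3} are in hand.

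To make the conclusion concrete, and as an independent check that does not route through the lemma, I would also exhibit the splitting explicitly. Given an idempotent $e : A \rightarrow A$ with $e^{2} = e$, I would set $B = e(A)$. Because $e$ is $R$-linear and $A$ is a left ideal, $B$ is closed under addition and under left multiplication by $R$ (as $r\,e(x) = e(rx) \in B$ for all $r \in R$), so $B$ is a left ideal and hence $B \in \nu\mathbb{L}_{R}$. I would then take $f : B \rightarrow A$ to be the inclusion and $g : A \rightarrow B$ to be the corestriction $g(x) = e(x)$ of $e$ to its image.

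The verification that this is a splitting is the only computation, and it is routine: for $b \in B$ write $b = e(a)$, so that $(g \circ f)(b) = e(b) = e(e(a)) = e(a) = b$, giving $g \circ f = 1_{B}$; and for $x \in A$ we have $(f \circ g)(x) = e(x)$, so $f \circ g = e$. Hence $e$ is a split idempotent, confirming idempotent completeness directly. The one point that merits care, and the closest thing to an obstacle, is checking that $B = e(A)$ genuinely lands in $\nu\mathbb{L}_{R}$, that is, that the image of an idempotent morphism between ideals is again a left ideal of $R$; this is precisely where the $R$-linearity of $e$ is used, and it is also where the Noetherian hypothesis is invisible, since $B$ is automatically finitely generated as a sub-ideal once $R$ is Noetherian.
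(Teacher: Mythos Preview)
Your first paragraph is exactly the paper's proof: invoke Theorem~\ref{2} so that every morphism (in particular every idempotent) has a kernel, then apply Lemma~\ref{3} to conclude idempotent completeness. The explicit splitting via $B = e(A)$ that you add as an independent check is correct and more informative, but it goes beyond what the paper itself records.
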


\begin{proof}
	We have already proved that in theoerm \ref{2} every morphisms in $ \mathbb{L}_{R} $ have kernel. In particular every idempotent arrows have kernel. Hence by Lemma \ref{3} ,	$ \mathbb{L}_{R} $ is idempotent complete.
\end{proof}

\begin{theorem}
	The category $ \mathbb{L}_{R} $,  of all left ideals of a Noetherian ring $ R $ 
	 is a category with subobjects and every morphisms have canonical factorization.
\end{theorem}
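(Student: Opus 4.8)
The plan is to first exhibit the subcategory $\mathcal{P}$ of $\mathbb{L}_{R}$ that realizes it as a category with subobjects, and then to produce the canonical factorization of an arbitrary morphism by passing through its image ideal.

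First I would take $\mathcal{P}$ to be the subcategory of $\mathbb{L}_{R}$ having the same objects (all left ideals of $R$) and whose morphisms are exactly the inclusion maps $j_{(A,B)} : A \rightarrow B$, $x \mapsto x$, defined whenever $A \subseteq B$ as left ideals. Each such inclusion is plainly $R$-linear, hence a genuine morphism of $\mathbb{L}_{R}$, and a composite of inclusions is again an inclusion, so $\mathcal{P}$ is a subcategory with $\nu\mathcal{P} = \nu\mathbb{L}_{R}$. To see that $\mathcal{P}$ is a strict preorder, I would note that between two left ideals $A$ and $B$ there is at most one morphism of $\mathcal{P}$---namely $j_{(A,B)}$ when $A \subseteq B$, and none otherwise---so each hom-set of $\mathcal{P}$ has at most one element; the induced relation on objects is ordinary set inclusion, which is antisymmetric and therefore a partial order. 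Since each inclusion is injective it is left cancellable, so every morphism of $\mathcal{P}$ is a monomorphism. For the third axiom, suppose $f = j_{(A,C)}$ and $g = j_{(B,C)}$ lie in $\mathcal{P}$ with $f = g \circ h$ for some $h \in \mathbb{L}_{R}(A,B)$; evaluating at $x \in A$ gives $x = f(x) = g(h(x)) = h(x)$, whence $A \subseteq B$ and $h = j_{(A,B)} \in \mathcal{P}$. This establishes that $(\mathbb{L}_{R},\mathcal{P})$ is a category with subobjects.

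For the canonical factorization of a morphism $f : A \rightarrow B$ in $\mathbb{L}_{R}$, the key observation is that the image $f(A)$ is again a left ideal of $R$, not merely a submodule of $B$: it is closed under addition since $f(x) + f(y) = f(x+y)$, and closed under left multiplication since for $r \in R$ and $x \in A$ one has $r f(x) = f(rx)$ with $rx \in A$ because $A$ is a left ideal. Thus $f(A) \in \nu\mathbb{L}_{R}$. I would then factor $f$ as $A \xrightarrow{\,q\,} f(A) \xrightarrow{\,j\,} B$, where $q(x) = f(x)$ is the corestriction of $f$ onto its image and $j = j_{(f(A),B)}$ is the inclusion. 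Here $q$ is surjective and hence an epimorphism---any two morphisms out of $f(A)$ agreeing after precomposition with $q$ must agree on all of $f(A)$---while $j$ is an inclusion, so $f = j \circ q$ is the desired canonical factorization.

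The main point requiring care is the verification that $f(A)$ is a left ideal rather than merely an additive subgroup or an $R$-submodule of $B$; this is precisely where both the hypothesis that $A$ is a left ideal and the $R$-linearity of $f$ are used, and it is what guarantees that the middle object $f(A)$ of the factorization actually lies in $\mathbb{L}_{R}$. The remaining verifications---that $q$ is epic, that $j$ is an inclusion, and that $f = j \circ q$---are then routine.
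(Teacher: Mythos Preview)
Your proposal is correct and follows essentially the same approach as the paper: you take $\mathcal{P}$ to be the subcategory of set-inclusions $j_{(A,B)}$, verify the three axioms for a category with subobjects, and factor an arbitrary $f$ through its image ideal $f(A)$ via the surjection $q$ followed by the inclusion $j_{(f(A),B)}$. Your write-up is in fact somewhat more careful than the paper's, particularly in the explicit verification that $f(A)$ is a left ideal and in checking that $\mathcal{P}$ is closed under composition.
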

\begin{proof}
	To prove $ \mathbb{L}_{R} $ is a category with subobjects, it will suffices to construct a subcategory $ \mathcal{P} $ of $ \mathbb{L}_{R} $, which satisfies the conditions in Definition \ref{4}. For, dfine a partial order on $ \nu \mathbb{L}_{R}$ as follows:	
	$$ A \subseteq B \iff a_{i} = r_{i1}b_{1} + ... + r_{in}b_{n},\, r_{i1}, ..., r_{in} \in R,\,  i = 1, ..., n $$
	where $ A = < a_{1}, a_{2}, ..., a_{n1} >_{l},\quad B = < b_{1}, b_{2}, ..., b_{n2}>_{l} \in\, \nu \mathbb{L}_{R}$. Then the morphism $ j_{(A, B)}: A \rightarrow B $ defined by $ j_{(A, B)}(x) = x , \,\,\forall x \in{A} $ is a unique monomorphism and 
	the subcategory $ \mathcal{P} $ of $ \mathbb{L}_{R} $ with $ \nu \mathcal{P} = \nu \mathbb{L}_{R} $ and morphisms of $ \mathcal{P} $ are the inclusions$ j_{(A, B)} $  is a strict preorder.\\
	
	Suppose that $ j_{(A, C)} , j_{(B, C)} \in \mathcal{P} $ and $ j_{(A, C)} = j_{(B, C)}h  $  for some $ h \in \mathbb{L}_{R} $.  Then $j_{(A, C)}(x) = j_{(B, C)}h(x)$ for every 
	$ x \in A$ and since both $ j_{(A, C)}$ and $ j_{(B, C)}$ are inclusions we have $ h $ is the inclusion. Hence ($\mathbb{L}_{R}, \mathcal{P}$) is the category with subobject. \\
	Consider any morphism $ f : A \rightarrow B $ in $ \mathbb{L}_{R} $, since $ A $ is left ideal and $ f $ is left $ R $-linear implies $ f(A) $ is a left ideal.
	Let $ q : A \rightarrow f(A) $ be the restriction of $f$ to $im\,(f)$, then it is easy to observe that $ q $ is an epimorphism, $ f(A) \subseteq B $ and $ j_{(f(A), B)} : f(A) \rightarrow B $ is an inclusion. i.e., $ f = j_{(f(A), B)}q $ is a canonical factorization. Thus every morphism in $ \mathbb{L}_{R} $ admits caninical factorization.
\end{proof}

\section{Examples of ideal category of some rings}
In the following we provide some examples of ideal category of some Noetherian rings.
\begin{example} {\textbf{Ideal category of $ \mathbb{Z} $}}
	
		Consider the category $ \mathbb{L_Z} \,[ \mathbb{R_Z}] $ of left [right] ideals of the ring of integers $ \mathbb{Z} $. Then   
		\begin{center}
		$ \nu\mathbb{L_Z} = \{ \langle n\rangle  : n \in \mathbb{Z} \}$ 	\\
		$ \mathbb{L_Z} (\langle n\rangle,\langle m\rangle ) = \{ \rho_{(n,s,m)} : x \mapsto xs\,\vert  ns \in \langle m\rangle\,; \, s \in \mathbb{Z}\,\, \forall x \in \langle n\rangle \}$
		\end{center}
Now $ \rho_{(n,s,m)} :\langle n\rangle \,\rightarrow \,\langle m\rangle  $ and $ \rho_{(m,t,p)} : \langle m\rangle\, \rightarrow \, <p>  $	and thier composition is  $ \rho_{(m,t,p)}\circ \rho_{(n,s,m)} = \rho_{(n,st,p)} :  \langle n\rangle \,\rightarrow \,<p>  $	. For 
$ \rho_{(n,s,m)} , \rho_{(n,t,m)} \in  \mathbb{L_Z} (\langle n\rangle \,,\,\langle m\rangle)$, 
let $ \rho_{(n,s,m)} + \rho_{(n,t,m)} = \rho_{(n,s + t,m)}$, with respect to this addition $ \mathbb{L_Z} (\langle n\rangle \,, \,\langle m\rangle) $ is an abelian group and $ \langle 0\rangle $ is the zero element, hence $ \mathbb{L_Z} $ is a preadditive category with zero object.\\
For any two non zero ideals $ \langle n\rangle \,, \,\langle m\rangle $ of $ \mathbb{Z} $, the element $ mn \ $ always belongs to $ \langle n\rangle \cap \,\langle m\rangle $, and so  
$ \langle n\rangle \cap \langle m\rangle = \{0\} $  only when  $ n = 0 $ or $ m = 0 $.
Hence in  $ \mathbb{L_Z} $ biproduct exist only for those pair of objects in which one of them is the zero ideal. A morphism $ \rho_{(n,s,m)} :\langle n\rangle \,\rightarrow \,\langle m\rangle  $ is a monomorphism for $ s \neq 0 $ and zero object together with zero arrow will give the kernel of this morphism.
\end{example}

\begin{example}{\textbf{Ideal category of $ \mathbb{Z}_{6} $}}
	
Let $  R = \mathbb{Z}_{6} $ and $ \mathbb{L}_{R} $ be  the category whose objects are left ideals of the ring  and morphisms are $ R $ - linear transformations, i.e.,

\begin{center}  
$ \nu\mathbb{L_Z}_{6} = \{ \langle 0\rangle ,\langle 1\rangle ,\langle 2\rangle,\langle 3\rangle\}$ 	\\
$ \mathbb{L_Z}_{6} (\langle n\rangle \,, \,\langle m\rangle) = \{ \rho_{(n,s,m)} : x \mapsto xs\,\vert \, ns \in \langle m\rangle\,; \, s \in \mathbb{Z}_{6}\,\, \forall x \in \langle n\rangle \}$\\	
\end{center}

The composition and addtion is defined as in the case of $ \mathbb{Z} $. Hence 
$ \mathbb{L_Z}_{6} $ is a preadditive category with zero object $ \langle 0\rangle $. 
In $ \mathbb{L_Z}_{6} $ biproduct exisit for the pairs $ (\langle 2\rangle,\langle 3\rangle)$ and to $ (\langle 0\rangle , \langle n\rangle) $ where $ n = 1,2,3 $.
\end{example}

	\end{document}